\newtheorem{theorem}{Theorem}
\newtheorem{proposition}{Proposition} 
\newtheorem{remark}{Remark} 
\newtheorem{lemma}{Lemma} 
\newtheorem{example}{Example}
\begin{document}\title[New stability criteria for periodic planar Lotka-Volterra systems]{New stability criteria for periodic planar Lotka-Volterra systems}
	
\author[Paulo Santana]
{Paulo Santana$^1$}
	
\address{$^1$ IBILCE--UNESP, CEP 15054--000, S. J. Rio Preto, SP, Brazil}
\email{paulo.santana@unesp.br}
	
\subjclass[2020]{Primary: 34D20. Secondary: 34C25 and 92D25.}
	
\keywords{Population dynamics; Periodic planar systems; Periodic predator–prey model.}
	
\begin{abstract}
	We present new a stability result for $T$-periodic solutions of the periodic predator–prey Lotka–Volterra model, based on boundaries for the average of the coexistence states. Our result complements previous one in the literature.
\end{abstract}
	
\maketitle
	
\section{Introduction}\label{Sec1}

Consider the planar non-autonomous periodic Lotka–Volterra system given by
\begin{equation}\label{1}
	\dot u=u(a(t)-b(t)u-c(t)v), \quad \dot v=v(d(t)+e(t)u-f(t)v),
\end{equation}
with all coefficients being continuous and $T$-periodic functions, $T>0$. Suppose also that $b$, $c$, $e$ and $f$ are strictly positive functions. Under these conditions system \eqref{1} models, in the positive quadrant $\mathbb{R}^2_+=\{(u,v)\in\mathbb{R}^2\colon u>0,\;v>0\}$, a predator-prey interaction between two species in a $T$-periodic environment. In the last decades, there is a great effort of R. Ortega and coauthors for the understanding of the $T$-periodic orbits of system \eqref{1}. In what follows, we present a brief survey on the obtained results. System \eqref{1} can have three types $T$-periodic non-negative solutions $(u(t),v(t))$.
\begin{enumerate}[label=(\alph*)]
	\item The \emph{trivial state}, given by $u(t)=v(t)=0$ for every $t\in[0,T]$;
	\item The \emph{semi-trivial state}, given by: 
	\begin{enumerate}[label=(\roman*)]
		\item $u(t)>0$, $v(t)=0$ for every $t\in[0,T]$, or
		\item $u(t)=0$, $v(t)>0$ for every $t\in[0,T]$;
	\end{enumerate}
	\item The \emph{coexistence state}, given by $u(t)>0$ and $v(t)>0$ for every $t\in[0,T]$.
\end{enumerate}
Observe that the trivial and semi-trivial states represents the extinction of at least one of the species.

The question about the \emph{existence} of coexistence states was completely solved by the authors in \cite{GomOrtTin1996}. More precisely, let 
	\[\lambda=\frac{1}{T}\int_{0}^{T}a, \quad \mu=\frac{1}{T}\int_{0}^{T}d.\]
It follows from \cite{GomOrtTin1996} that \eqref{1} has a semi-trivial state of type $(i)$ (resp. $(ii)$) if, and only if, $\lambda>0$ (resp, $\mu>0$). Moreover, in this case the semi-trivial state $(\theta_\lambda(t),0)$ (resp. $(0,\theta_\mu(t)$) is given by the unique positive $T$-periodic solution of
	\[\dot u=u(a(t)-b(t)u) \quad \bigl(\textnormal{resp. } \dot v=v(d(t)-f(t)v)\bigr).\] 
In particular, if it exists then it is unique (of its type). Moreover, it follows from \cite[Section $4$]{GomOrtTin1996} that the following statements hold (for a precise definition of \emph{linear} and \emph{asymptotic} stability, see Appendix~~\ref{AppA}).
\begin{enumerate}[label=(\alph*)]
	\item The trivial state $(0,0)$ is linearly stable if, and only if, $\lambda\leqslant0$ and $\mu\leqslant0$. Moreover, in this case it is also asymptotically stable in $\mathbb{R}^2_+$.
	\item The semi-trivial state $(\theta_\lambda,0)$ is linearly stable if, and only if, $\lambda>0$ and 
		\[\mu\leqslant-\frac{1}{T}\int_{0}^{T}e\theta_\lambda.\]
	Moreover, in this case it is also asymptotically stable in $\mathbb{R}^2_+$.
	\item The semi-trivial state $(0,\theta_\mu)$ is linearly stable if, and only if, $\mu>0$ and 
	\[\lambda\leqslant\frac{1}{T}\int_{0}^{T}c\theta_\mu.\]
	Moreover, in this case it is also asymptotically stable in $\mathbb{R}^2_+$.
\end{enumerate}
It can be proved \cite[Theorem $4.1$]{GomOrtTin1996} that for a coexistence state to exist it is necessary to the trivial and semi-trivial states (when it exist) to be linearly unstable. That is, it is necessary that
\begin{equation}\label{17}
	\mu>-\frac{1}{T}\int_{0}^{T}e\theta_\lambda, \quad \lambda>\frac{1}{T}\int_{0}^{T}c\theta_\mu.
\end{equation}
On the other hand, it also follows from \cite[Section $5$]{GomOrtTin1996} that \eqref{17} is as well a sufficient condition and thus we have a complete characterization on the existence of coexistence states for system \eqref{1}. Moreover, it follows from \cite[Theorem $5.1$]{GomOrtTin1996} that system \eqref{1} always has at most a finite number of coexistence states.

With the existence completely characterized, the question turned to the finiteness, uniqueness and stability of the coexistence states. For this end, before we enunciate the obtained results, we introduce some technical notations.

Let $\mathbb{T}_T=\mathbb{R}/T\mathbb{Z}$ be endowed with the push-forward measure of the Lebesgue measure on $\mathbb{R}$ (see Bogachev \cite[Section $3.6$]{Bogachev}). That is, the measure $\sigma$ such that $\sigma(a,b)=b-a$, for $0\leqslant a\leqslant b\leqslant T$. Given $p\in[1,\infty]$, let $L^p=L^p(\mathbb{T}_T)$ denote the usual Lebesgue $L^p$-space associated with $\mathbb{T}_T$. Since $\mathbb{T}_T$ has finite measure, we observe that
	\[L^\infty\subset L^{p_2}\subset L^{p_1}\subset L^1,\]
where $1\leqslant p_1\leqslant p_2\leqslant\infty$. In particular, since the coefficients of \eqref{1} are continuous, it follows that they lie in $L^\infty$ and thus also in $L^p$, for every $p\in[1,\infty]$. Let $\varphi\in L^p$. For simplicity we denote $||\varphi||_p=||\varphi||_{L^p}$, where $||\cdot||_{L^p}$ is the usual norm of the Banach space $L^p$. Let also
	\[\varphi_L=\min\limits_{t\in[0,T]}\{\varphi(t)\}, \quad \varphi_M=\max\limits_{t\in[0,T]}\{\varphi(t)\}, \quad \overline{\varphi}=\frac{1}{T}\int_{0}^{T}\varphi,\]
be the minimum, maximum and average of $\varphi$. 

About the uniqueness of the coexistence states of \eqref{1}, consider the statements
\begin{equation}\label{18}
	\overline{a}>0, \quad -\left(\frac{e}{b}\right)_L<\frac{\overline{d}}{\overline{a}}<\left(\frac{f}{c}\right)_L,
\end{equation}
and
\begin{equation}\label{19}
	\left(\frac{b}{e}\right)_L>\left(\frac{c}{f}\right)_M.
\end{equation}
The authors in \cite{AmiOrt1994} proved that if \eqref{19} holds, then \eqref{1} has at most one coexistence state \cite[Proposition $3.3$]{AmiOrt1994}. 

Moreover, as observed by the authors in \cite[Remark $2$, p. $11$]{AmiOrt1994}, if both \eqref{18} and \eqref{19} holds, then it follows from Tineo \cite[Theorem~$1.5$]{Tineo1} that \eqref{1} has exactly one coexistence state and it is \emph{globally} asymptotically stable. 

More sufficient conditions for asymptotically stability of coexistence stated were obtained by R. Ortega and coauthors \cites{AmiOrt1994,Ort2021}. Such conditions can somewhat be called \emph{$L_1$-condition} and \emph{$L_\infty$-condition} (see Remarks~\ref{Remark2} and \ref{Remark3}). In recent years V. Ortega and Rebelo~\cite{OrtReb2023} constructed a bridge between these two conditions, obtaining a \emph{$L_p$-condition}, $p\in[1,\infty]$. Such condition requires that the $L_p$-norm of all possible coexistence states $(u_0,v_0)$ to satisfy a given inequality. To this end, the authors in \cite{OrtReb2023} also provide upper bounds for $||u_0||_p$ and $||v_0||_p$, independent from each other. This independence provides a \emph{unified} sufficient condition for uniqueness and asymptotically stability of $(u_0,v_0)$. 

In this paper we obtain new \emph{intertwined} upper bounds for $||u_0||_p$ and $||v_0||_p$, which in turn implies on new sufficient conditions for uniqueness and asymptotically stability of $(u_0,v_0)$, that can be applied when the unified test is inconclusive.

The paper is organized as follows. In Section~\ref{Sec2} we state our main Theorem. At Section~\ref{Sec3} we have some preliminaries results to prove the main Theorem at Section~\ref{Sec4}. In Section~\ref{Sec5} we provide an example where previous results in the literature are inconclusive, while ours is not. We also provide some further thoughts. Finally, we have an Appendix with some technicalities and illustrations.

\section{Statement of the main result}\label{Sec2}

Given $p\in[1,\infty)$ and $\varphi\in L^p$ non-negative, the $L^p$-\emph{average} $\overline{\varphi}_p\in\mathbb{R}_{\geqslant0}$ is given by,
	\[\overline{\varphi}_p=\frac{1}{T^\frac{1}{p}}\left(\int_{0}^{T}\varphi^p\right)^\frac{1}{p}=\frac{1}{T^\frac{1}{p}}||\varphi||_p.\]
If $p=\infty$, then we define $\overline{\varphi}_\infty=||\varphi||_\infty$. Observe that $\overline{\varphi}_p\to\overline{\varphi}_\infty$ as $p\to\infty$.

Given a system of the form \eqref{1} and $p\in[1,\infty)$, let $C_p\subset\mathbb{R}^2$ be the set given by the points $(x,y)\in\mathbb{R}^2$ such that $x>0$, $y>0$ and
\begin{equation}\label{21}
	\begin{array}{rcl}
		b_LU^{1-p}x^p+c_LV^{1-p}y^p\leqslant &\overline{a}& \leqslant b_Mx+c_My \vspace{0.2cm} \\
		-e_Mx+f_LV^{1-p}y^p\leqslant &\overline{d}& \leqslant-e_LU^{1-p}x^p+f_My,
	\end{array}
\end{equation}
where, 
	\[U=\left(\frac{a}{b}\right)_M, \quad V=\left(\frac{d}{f}\right)_M+\left(\frac{e}{f}\right)_MU.\]	
Let also $C_\infty\subset\mathbb{R}^2$ be the set given by
	\[0<x\leqslant U, \quad 0<y\leqslant V.\]
For an illustration of $C_p$ and more details about the definition of $C_\infty$, see Appendix~\ref{AppB}. We observe that $C_p$ is bounded and that it may be empty. Let also $\mathcal{J}\colon[1,\infty)\to\mathbb{R}$ be given by,
	\[\mathcal{J}(q)=\int_{0}^{2\pi}\frac{1}{\bigl(|\cos\theta|^{2q}+|\sin\theta|^{2q}\bigr)^\frac{1}{q}}\;d\theta.\]
It follows from \cite[Proposition~$2.2$]{OrtReb2023} that,
	\[\lim\limits_{q\to\infty}\mathcal{J}(q)=8.\]
Hence, we can continuously extend $\mathcal{J}$ to $[1,\infty]$ by defining $\mathcal{J}(\infty)=8$. For more details about $\mathcal{J}(q)$, see Appendix~\ref{AppC}. Given $p\in[1,\infty]$, we recall that its \emph{conjugate} $q\in[1,\infty]$ is given by the unique solution of $1/p+1/q=1$. In what follows $q$ always denote the conjugate of $p$. Our main result is the following.

\begin{theorem}\label{Main1}
	Consider a system of the form \eqref{1} and its respective set $C_p$, $p\in[1,\infty]$. Then the following statements hold.
	\begin{enumerate}[label=(\alph*)]
		\item If $(u(t),v(t))$ is coexistence state, then $(\overline{u}_p,\overline{v}_p)\in C_p$. In particular, $C_p\neq\emptyset$ for every $p\in[1,\infty]$.
		\item Suppose we have at least one coexistence state. If
			\[T\left(\sqrt{c_Me_Mx_py_p}+\frac{1}{2}(b_Mx_1+f_My_1)\right)\leqslant\frac{\mathcal{J}(q)}{2^{2-\frac{1}{q}}}\]
		for every $(x_p,y_p)\in C_p$ and $(x_1,y_1)\in C_1$, then such coexistence state is unique and asymptotically stable.
	\end{enumerate}
\end{theorem}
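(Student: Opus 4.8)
The two parts call for rather different techniques, so I would handle them in turn.

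\medskip

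\textbf{Part (a): localizing the averages.} The plan is to derive the four inequalities in \eqref{21} by integrating the logarithmic derivatives of $u$ and $v$ over one period. Since $(u(t),v(t))$ is a positive $T$-periodic solution, dividing the first equation of \eqref{1} by $u$ and integrating gives $\int_0^T(a-bu-cv)=0$, i.e. $\overline{a}=\overline{bu}+\overline{cv}$; similarly $\overline{d}=-\overline{eu}+\overline{fv}$. The upper bound $\overline{a}\leqslant b_Mx+c_My$ (with $x=\overline u_p$, $y=\overline v_p$) then follows from $\overline{bu}\leqslant b_M\overline u$ together with the inequality $\overline u\leqslant \overline u_p$ for $p\geqslant 1$ (Jensen / power-mean on a probability space), and likewise for the bound on $\overline d$ from below. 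For the lower bound $\overline{a}\geqslant b_LU^{1-p}\overline u_p^{\,p}+c_LV^{1-p}\overline v_p^{\,p}$ I would first establish the a priori $L^\infty$-bounds $u(t)\leqslant U$ and $v(t)\leqslant V$ for any coexistence state — this is where $U=(a/b)_M$ and $V=(d/f)_M+(e/f)_M U$ come from, via a standard maximum-point argument: at a maximum of $u$, $\dot u=0$ forces $u\leqslant a/b\leqslant U$, and then feeding $u\leqslant U$ into the $v$-equation and repeating gives $v\leqslant V$. Given $0<u\leqslant U$ we have $bu\geqslant b_L u = b_L U^{1-p}\,U^{p-1}u\geqslant b_L U^{1-p} u^p$ pointwise (since $u^{p-1}\leqslant U^{p-1}$), hence $\overline{bu}\geqslant b_L U^{1-p}\overline{u^p}=b_L U^{1-p}\overline u_p^{\,p}$, and similarly for the $cv$, $eu$, $fv$ terms; combining with the two integrated identities yields \eqref{21}. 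The case $p=\infty$ is the a priori bound $u\leqslant U$, $v\leqslant V$ itself, so $(\overline u_\infty,\overline v_\infty)=(\|u\|_\infty,\|v\|_\infty)\in C_\infty$. Nonemptiness of $C_p$ for all $p$ follows once we know a coexistence state exists; if none exists the statement "$C_p\neq\emptyset$" in (a) should be read under the standing existence hypothesis of (b), or else handled by noting $C_\infty\neq\emptyset$ trivially and that $C_p$ can be checked directly — I would flag this minor point.

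\medskip

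\textbf{Part (b): uniqueness and stability from a smallness condition.} This is the substantive part and it should follow the route of \cite{OrtReb2023}: reduce stability of a coexistence state to a Lyapunov-type / curvature inequality and then bound the relevant quantity using the $L^p$–$L^q$ duality encoded in $\mathcal{J}(q)$. The standard mechanism (going back to the $L^1$- and $L^\infty$-conditions of Ortega and coauthors, and unified in \cite{OrtReb2023}) is: writing the variational equation along a coexistence state, asymptotic stability is guaranteed once the "rotation number" or the integral of the trace-type quantity $b(t)u(t)+f(t)v(t)$ plus the off-diagonal coupling $\sqrt{c(t)e(t)u(t)v(t)}$ is dominated by the geometric constant $\mathcal{J}(q)/2^{2-1/q}$ coming from the isoperimetric-type estimate on the plane with the $\ell^{2q}$-type norm. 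Concretely I would: (i) pass to the change of variables linearizing \eqref{1} near $(u,v)$ and write the associated planar linear $T$-periodic system; (ii) invoke the criterion — proved in Section~\ref{Sec3} of this paper, which I may assume — that such a system is asymptotically stable provided $\int_0^T\!\big(\sqrt{c_Me_M\,u\,v}+\tfrac12(b u+f v)\big)\,dt$ (or the pointwise-majorized version with $b_M,f_M$) is at most $\mathcal{J}(q)/2^{2-1/q}$; (iii) bound $\int_0^T \sqrt{cu\cdot ev}$ by $\sqrt{c_M e_M}\int_0^T\sqrt{u}\sqrt{v}$ and then by Cauchy–Schwarz/Hölder by $T\sqrt{c_Me_M}\,\overline u_p^{1/2}\cdots$ — more carefully, $\int_0^T\sqrt{uv}\leqslant (\int u^p)^{1/(2\cdot\text{?})}$; the clean way is $\int_0^T\sqrt{uv}\leqslant \sqrt{\int_0^T u}\sqrt{\int_0^T v}\leqslant T\sqrt{\overline u_p\,\overline v_p}$ using $\overline u\leqslant\overline u_p$, giving the term $T\sqrt{c_Me_M x_p y_p}$; and (iv) bound $\tfrac12\int_0^T(bu+fv)\leqslant \tfrac12 T(b_M\overline u+f_M\overline v)\leqslant \tfrac12 T(b_M x_1+f_M y_1)$ using $\overline u=\overline u_1=x_1$. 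By part (a), $(\overline u_p,\overline v_p)\in C_p$ and $(\overline u,\overline v)\in C_1$, so the hypothesis of (b) applied with these particular points gives exactly the required inequality, yielding asymptotic stability. Uniqueness then comes for free by the usual argument: two distinct coexistence states would each be asymptotically stable, but in a planar monotone/competitive-type periodic system asymptotically stable $T$-periodic orbits cannot coexist without an unstable one between them (or, following \cite{OrtReb2023}, uniqueness is part of the same Lyapunov-function conclusion) — I would cite the relevant result rather than reprove it.

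\medskip

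\textbf{Main obstacle.} The delicate step is (b)(ii)–(iii): getting the constant $\mathcal{J}(q)/2^{2-1/q}$ exactly right and making the Hölder estimates match the geometry of the $\mathcal{J}$-integral (the interplay between the exponent $p$ in $\overline u_p$, its conjugate $q$ in $\mathcal{J}(q)$, and the factor $2^{2-1/q}$). In particular the mixed term $\sqrt{c_Me_Mx_py_p}$ must come out with coefficient $1$ and the diagonal term with coefficient $1/2$, which forces a specific and somewhat rigid application of the sharp planar estimate from \cite{OrtReb2023}; reconciling the $L^p$-average appearing in the mixed term with the $L^1$-average in the diagonal term (hence the appearance of both $C_p$ and $C_1$ in the hypothesis) is the technical heart of the argument. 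Establishing the a priori bounds $u\leqslant U$, $v\leqslant V$ and the elementary inequalities $\overline\varphi\leqslant\overline\varphi_p$ is routine by comparison, so I expect no trouble there.
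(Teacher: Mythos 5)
Your proposal matches the paper's proof in all essentials: part (a) is obtained by integrating $\dot u/u$ and $\dot v/v$ over a period and combining $\overline{\varphi}\leqslant\overline{\varphi}_p$ with the a priori bounds $\|u\|_\infty\leqslant U$, $\|v\|_\infty\leqslant V$ (your pointwise inequality $u\geqslant U^{1-p}u^p$ is exactly the Littlewood interpolation step the paper uses), and part (b) is the paper's reduction to the Ortega--Rebelo $L_p$-condition (Theorem~\ref{T2}) via the majorizations $\|eu\|_p\leqslant e_M\|u\|_p$, $\|cv\|_p\leqslant c_M\|v\|_p$, $\|bu-fv\|_1\leqslant b_M\|u\|_1+f_M\|v\|_1$ together with part (a). The only point to tidy up is that the mixed term in Theorem~\ref{T2} is $T^{\frac{1}{q}}\sqrt{\|eu\|_p\|cv\|_p}$ rather than $\int_0^T\sqrt{ce\,uv}$, so it should be bounded directly by $T^{\frac{1}{q}}T^{\frac{1}{p}}\sqrt{c_Me_M\overline{u}_p\overline{v}_p}=T\sqrt{c_Me_M\overline{u}_p\overline{v}_p}$ instead of through $\int_0^T\sqrt{uv}$; with that substitution your chain yields precisely the hypothesis of statement (b) evaluated at $(\overline{u}_p,\overline{v}_p)\in C_p$ and $(\overline{u}_1,\overline{v}_1)\in C_1$.
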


\begin{remark}
	As presented in the introduction, we recall that the existence of a coexistence state is completely characterized. Therefore, the hypothesis of having at least one is not a loss of generality. 
\end{remark}

It follows from Hölder's inequality that if $\varphi\geqslant0$, then $\overline{\varphi}_{p_1}\leqslant\overline{\varphi}_{p_2}$ for $1\leqslant p_1\leqslant p_2\leqslant\infty$. Hence, we can replace statement $(b)$ of Theorem~\ref{Main1} for the following weak version.

\begin{enumerate}
	\item[$(b')$] Suppose we have at least one coexistence state. If
	\[T\left(\sqrt{c_Me_Mx_py_p}+\frac{1}{2}(b_Mx_p+f_My_p)\right)\leqslant\frac{\mathcal{J}(q)}{2^{2-\frac{1}{q}}}\]
	for every $(x_p,y_p)\in C_p$, then such coexistence state is unique and asymptotically stable.
\end{enumerate}

\begin{remark}[The $L_1$-condition]\label{Remark2}
	Observe that if we replace $p=1$ at Theorem~\ref{Main1}$(b)$, we obtain
		\[T\left(\sqrt{c_Me_Mxy}+\frac{1}{2}(b_Mx+f_My)\right)\leqslant2\]
	for every $(x,y)\in C_1$, where $C_1\subset\mathbb{R}^2$ is the set bounded by $x>0$, $y>0$ and
	\[\begin{array}{rcl}
		b_Lx+c_Ly\leqslant &\overline{a}& \leqslant b_Mx+c_My \vspace{0.2cm} \\
		-e_Mx+f_Ly\leqslant &\overline{d}& \leqslant-e_Lx^p+f_My.
	\end{array}\]
This is precisely the $L_1$-condition obtained by R. Ortega~\cite[Theorem~$5.2$]{Ort2021}.
\end{remark}

\begin{remark}[The $L_\infty$-condition]\label{Remark3}
	Observe that if we replace $p=\infty$, $x=U$ and $y=V$ at Theorem~\ref{Main1}$(b')$, we obtain
		\[T\left(\sqrt{c_Me_MUV}+\frac{1}{2}(b_MU+f_MV)\right)\leqslant\pi.\]
	This is the $L_\infty$-condition obtained by R. Ortega and Amine~\cite[Proposition~$4.5$]{AmiOrt1994}.
\end{remark}

\section{Preliminaries results}\label{Sec3}

In this section we recall the \emph{$L_p$-condition} obtained by V. Ortega and Rebelo~\cite[Theorem $3.1$]{OrtReb2023} and also a technical lemma.

\begin{theorem}[The $L_p$-condition]\label{T2}
	Suppose that all possible coexistence states $(u,v)$ of system \eqref{1} satisfy\footnote{Actually in their paper instead of the fraction $1/2$ in the expression, it appears the fraction $T/2$. But from equation $(7)$ in that paper one can see that it is a typo.}
	\begin{equation}\label{20}
		T^\frac{1}{q}\sqrt{||eu||_p||cv||_p}+\frac{1}{2}||bu-fv||_1\leqslant\frac{\mathcal{J}(q)}{2^{2-\frac{1}{q}}},
	\end{equation}
	where $p$ and $q$ are conjugated indices and $p$, $q\in[1,\infty]$. Then the coexistence state is unique and asymptotically stable. Moreover, any coexistence state $(u,v)$ satisfies
	\begin{equation}\label{26}
		||u||_p\leqslant\frac{||a||_p}{b_L}, \quad ||v||_p\leqslant\frac{||d||_p}{f_L}+\frac{e_M}{f_L}\frac{||a||_p}{b_L}.
	\end{equation}
\end{theorem}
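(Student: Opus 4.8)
The plan is to split the statement into its two assertions: the a priori estimate \eqref{26}, which is elementary and self-contained, and the uniqueness together with asymptotic stability, where hypothesis \eqref{20} and the constant $\mathcal J(q)/2^{2-1/q}$ do the work and which is essentially the contribution of \cite{OrtReb2023}.

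\emph{The a priori bounds.} Let $(u,v)$ be any coexistence state of \eqref{1}. Dividing the first equation by $u>0$ gives $(\log u)'=a-bu-cv\leqslant a-bu$, hence $bu\leqslant a-(\log u)'$. Multiplying by $u^{p-1}\geqslant 0$ and integrating over $[0,T]$, the term $\int_0^T(\log u)'\,u^{p-1}\,dt=\int_0^T\dot u\,u^{p-2}\,dt$ vanishes by $T$-periodicity, being the integral of the total derivative of a $T$-periodic function; hence $b_L\|u\|_p^p\leqslant\int_0^T bu^p\leqslant\int_0^T a\,u^{p-1}$, and H\"older's inequality with conjugate exponents $p$ and $p/(p-1)$ gives $\|u\|_p\leqslant\|a\|_p/b_L$. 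The same computation on the second equation, using $(\log v)'=d+eu-fv$ and $eu\leqslant e_M u$, yields $f_L\|v\|_p^p\leqslant\int_0^T d\,v^{p-1}+e_M\int_0^T u\,v^{p-1}\leqslant(\|d\|_p+e_M\|u\|_p)\|v\|_p^{p-1}$, hence $\|v\|_p\leqslant\|d\|_p/f_L+(e_M/f_L)\|u\|_p$; substituting the bound already found for $\|u\|_p$ gives \eqref{26}. The endpoints $p=1$ and $p=\infty$ follow by the same argument or by letting $p\to\infty$; the case $p=\infty$ is also immediate from the maximum-principle ceilings $u<(a/b)_M$, $v<(d/f)_M+(e/f)_M(a/b)_M$ obtained by evaluating $\dot u=0$, $\dot v=0$ at points where $u$, $v$ attain their maxima.

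\emph{Uniqueness and asymptotic stability.} Fix a coexistence state $(u_0,v_0)$ and linearize \eqref{1} along it. Using $a-bu_0-cv_0=\dot u_0/u_0$ and $d+eu_0-fv_0=\dot v_0/v_0$, the variational matrix equals $\mathrm{diag}(\dot u_0/u_0,\dot v_0/v_0)$ plus the matrix with rows $(-bu_0,\,-cu_0)$ and $(ev_0,\,-fv_0)$; the $T$-periodic invertible substitution $w=\mathrm{diag}(u_0,v_0)\,\tilde w$ cancels the logarithmic derivatives and conjugates the variational system to $\dot{\tilde w}=M(t)\tilde w$, with $M(t)$ having rows $(-bu_0,\,-cv_0)$ and $(eu_0,\,-fv_0)$, so that stability of $(u_0,v_0)$ is equivalent to asymptotic stability of $\dot{\tilde w}=M\tilde w$. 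Here $\int_0^T\mathrm{tr}\,M=-\int_0^T(bu_0+fv_0)<0$ automatically, and in polar coordinates $\tilde w=r(\cos\theta,\sin\theta)$ one finds $\tfrac{d}{dt}\log r=-\tfrac{bu_0+fv_0}{2}-\tfrac{bu_0-fv_0}{2}\cos 2\theta+\tfrac{eu_0-cv_0}{2}\sin 2\theta$, together with a companion formula for $\dot\theta$. Integrating $\tfrac{d}{dt}\log r$ over one period along the most expanding trajectory bounds $\log|\rho_{\max}|$; the crude estimate that freezes $\theta$ at its worst value is too weak, and the decisive step is to exploit the genuine rotation $\dot\theta$: where $\dot\theta\neq 0$ one changes the integration variable from $t$ to $\theta$ and estimates the resulting circle integral by H\"older's inequality against the weight whose $L^q$-mass over the circle is exactly $\mathcal J(q)$. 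Combined with the automatic contraction factor $\exp(\tfrac12\int_0^T\mathrm{tr}\,M)$, this turns \eqref{20} into the assertion that both Floquet multipliers of the original variational system lie strictly inside the unit disc, so $(u_0,v_0)$ is asymptotically stable. Uniqueness then follows: by \eqref{26} every coexistence state lies in a fixed bounded subset of $\mathbb R^2_+$, and \eqref{20} is precisely the quantitative small-rotation condition that forbids two confined asymptotically stable $T$-periodic solutions --- for instance because a suitable iterate of the Poincar\'e map becomes a contraction on that subset, or via a fixed-point-index count there.

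\emph{Main obstacle.} The a priori estimate is routine bookkeeping. The substantial point --- and the part genuinely due to \cite{OrtReb2023} --- is the sharp Floquet-multiplier estimate for the rotated, contraction-normalized system: one must control $\dot\theta$ carefully, handle the degenerate case in which $\theta$ fails to rotate, and extract exactly the constant $\mathcal J(q)/2^{2-1/q}$, which is what makes the criterion specialize to the $L^\infty$-condition of \cite{AmiOrt1994} (constant $\pi$) and to the $L^1$-condition of \cite{Ort2021} (constant $2$) at the two endpoints. Turning ``every coexistence state is asymptotically stable'' into ``there is exactly one'' is the second, more topological, ingredient, and it again leans on the confinement supplied by \eqref{26}.
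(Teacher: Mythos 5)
This statement is not proved in the paper at all: it is Theorem~3.1 of \cite{OrtReb2023}, quoted verbatim in the preliminaries (with a typo corrected in a footnote), so there is no in-paper proof to compare your attempt against. Judged on its own terms, your argument splits into a part that is genuinely complete and a part that is only a roadmap. The a priori bounds \eqref{26} are done correctly and in full: multiplying $b u\leqslant a-(\log u)'$ by $u^{p-1}$, killing the exact derivative $\tfrac{1}{p-1}\tfrac{d}{dt}u^{p-1}$ by periodicity, and applying H\"older with exponents $p$ and $p/(p-1)$ gives $b_L\|u\|_p\leqslant\|a\|_p$, and the analogous computation for $v$ gives the second bound. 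The reduction of the variational equation to $\dot{\tilde w}=M(t)\tilde w$ with $M$ having rows $(-bu_0,-cv_0)$ and $(eu_0,-fv_0)$ via the Lyapunov transformation $w=\mathrm{diag}(u_0,v_0)\tilde w$ is also correct, as is the polar-coordinate identity for $\tfrac{d}{dt}\log r$.

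The gap is in the two assertions you yourself flag as ``the substantial point.'' First, the passage from \eqref{20} to ``both Floquet multipliers lie strictly inside the unit disc'' is not an estimate you carry out: you name the ingredients (change of variable from $t$ to $\theta$ where $\dot\theta\neq0$, H\"older against a weight whose circle integral is $\mathcal{J}(q)$, the contraction factor from the trace) but do not show how they combine, nor how the degenerate non-rotating case is handled, and the constant $\mathcal{J}(q)/2^{2-\frac{1}{q}}$ is asserted rather than derived. Second, the step from ``every coexistence state is asymptotically stable'' to ``there is exactly one'' is left as a disjunction of two unproved possibilities; the standard argument here is the fixed-point-index computation of \cite{GomOrtTin1996} (the total index of the coexistence states equals one, and finiteness plus index $+1$ at each asymptotically stable state forces uniqueness), which needs to be invoked explicitly rather than replaced by a tentative contraction claim. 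Since the present paper treats the whole theorem as a black box, the honest options are either to cite \cite{OrtReb2023} for the stability and uniqueness parts as the author does, or to supply those two estimates in detail; as written, your proposal proves \eqref{26} but only sketches the rest.
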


\begin{lemma}\label{Lemma1}
	Let $(u(t),v(t))$ be a coexistence state of \eqref{1}. Then $||u||_\infty\leqslant U$ and $||v||_\infty\leqslant V$.
\end{lemma}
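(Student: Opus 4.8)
The plan is to exploit the fact that the prey equation is, by itself, a differential inequality of logistic type whose periodic solution is dominated by the "maximal" autonomous logistic equation, and then feed this bound into the predator equation to control $v$. First I would observe that, since $v(t)>0$ and $c(t)>0$, any coexistence state satisfies the differential inequality $\dot u \leqslant u\bigl(a(t)-b(t)u\bigr) = b(t)u\bigl(a(t)/b(t)-u\bigr) \leqslant b(t)u\bigl(U-u\bigr)$, where $U=(a/b)_M$. The key step is a comparison argument: I claim that a positive $T$-periodic function satisfying $\dot u\leqslant b(t)u(U-u)$ with $b>0$ must obey $u(t)\leqslant U$ for all $t$. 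Indeed, if $u(t_0)>U$ at some point, then at any local maximum $t_*$ of $u$ in a neighbourhood where $u>U$ we would need $\dot u(t_*)=0$ but $b(t_*)u(t_*)(U-u(t_*))<0$, a contradiction; one makes this rigorous by taking $t_*$ to be a point where $u$ attains its global maximum over $[0,T]$ (which exists by continuity and periodicity), noting $\dot u(t_*)=0$ there, and deriving $0 = \dot u(t_*)\leqslant b(t_*)u(t_*)(U-u(t_*))$, whence $U-u(t_*)\geqslant 0$, i.e.\ $\max u\leqslant U$. This gives $\|u\|_\infty\leqslant U$.

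Next I would substitute this into the predator equation. Since $u(t)\leqslant U$ and $e(t)>0$, we get $\dot v = v\bigl(d(t)+e(t)u-f(t)v\bigr) \leqslant v\bigl(d(t)+e(t)U-f(t)v\bigr) = f(t)v\Bigl(\tfrac{d(t)+e(t)U}{f(t)}-v\Bigr) \leqslant f(t)v\bigl(V-v\bigr)$, where $V=(d/f)_M+(e/f)_M U$ (using $(d+eU)/f \leqslant (d/f)_M + (e/f)_M U$ pointwise). Applying the same comparison argument as above — now with $f>0$ in place of $b$ — at a point where $v$ attains its global maximum over $[0,T]$ yields $\max v\leqslant V$, i.e.\ $\|v\|_\infty\leqslant V$.

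The only delicate point is the comparison lemma itself, and even that is elementary here because we only need the $L^\infty$ bound, not a sharp pointwise comparison with the solution of the autonomous logistic equation: evaluating the inequality at the global maximum, where the derivative vanishes by periodicity and smoothness, immediately forces the bound. So I do not anticipate a genuine obstacle; the argument is a two-step cascade (bound $u$, then bound $v$) built on this one-line maximum-principle observation, and the constants $U$ and $V$ have been defined precisely so that the pointwise estimates $a/b\leqslant U$ and $(d+eU)/f\leqslant V$ hold.
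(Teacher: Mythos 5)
Your proposal is correct and, despite the comparison-principle framing, reduces to exactly the paper's argument: evaluate the equations at the global maximum of $u$ (resp.\ $v$), where the derivative vanishes by periodicity, drop the nonnegative term $c(\tau)v(\tau)$, and use the pointwise bounds $a/b\leqslant U$ and $d/f+(e/f)U\leqslant V$, bounding $u$ first and then cascading into the bound for $v$. No gap.
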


\begin{proof} Let $\tau\in[0,T]$ be such that $u(\tau)=\max_{[0,T]}u(t)$. Since $u(\tau)$ is a local maximum, it follows that $\dot u(\tau)=0$ and thus it follows from the first equation of \eqref{1} that
	\[a(\tau)=b(\tau)u(\tau)+c(\tau)v(\tau)\geqslant b(\tau)u(\tau) \Rightarrow u(\tau)\leqslant \frac{a(\tau)}{b(\tau)}\leqslant U.\]
Similarly, if we let $\tau\in[0,T]$ be such that $v(\tau)=\max_{[0,T]}v(t)$, then it follows from the second equation of \eqref{1} that,
	\[f(\tau)v(\tau)=d(\tau)+e(\tau)u(\tau) \Rightarrow v(\tau)\leqslant\frac{d(\tau)}{f(\tau)}+\frac{e(\tau)}{f(\tau)}u(\tau)\leqslant V.\]
This finishes the proof. \end{proof}

For lower bound of the coexistence states of \eqref{1}, we refer to \cite[Lemma $5.5$]{GomOrtTin1996}.

\section{Proof of Theorem~\ref{Main1}}\label{Sec4}

Before we prove the theorem, we observe that given $p\in[1,\infty]$ and $\varphi\in L^P$, it follows from Hölder's inequality that,
\begin{equation}\label{2}
	||\varphi||_1\leqslant T^{1-\frac{1}{p}}||\varphi||_p.
\end{equation}
Moreover, it follows from Littlewood's inequality that
	\[||\varphi||_p\leqslant||\varphi||_1^\frac{1}{p}||\varphi||_\infty^{1-\frac{1}{p}},\]
and thus
\begin{equation}\label{3}
	||\varphi||_1\geqslant\frac{1}{||\varphi||_\infty^{p-1}}||\varphi||_p^p.
\end{equation}

\begin{proof}[Proof of Theorem~\ref{Main1}] Let $(u(t),v(t))$ be a coexistence state of \eqref{1}. Dividing the first equation of \eqref{1} by $u$ we obtain,
\begin{equation}\label{5}
	\frac{\dot u}{u}=a(t)-b(t)u-c(t)v.
\end{equation}
Integrating \eqref{5} in $t$, from $0$ to $T$, we obtain
	\[\int_{0}^{T}a=\int_{0}^{T}bu+\int_{0}^{T}cv,\]
and thus,
\begin{equation}\label{6}
	 b_L\int_{0}^{T}u+c_L\int_{0}^{T}v\leqslant \int_{0}^{T}a \leqslant b_M\int_{0}^{T}u+c_M\int_{0}^{T}v.
\end{equation}
Since $u\geqslant0$ and $v\geqslant0$, it follows from \eqref{6} that,
\begin{equation}\label{7}
	b_L||u||_1+c_L||v||_1\leqslant\int_{0}^{T}a\leqslant b_M||u||_1+c_M||v||_1.
\end{equation}
Applying \eqref{2} on the right-hand side of \eqref{7} we obtain
\begin{equation}\label{28}
	\int_{0}^{T}a\leqslant b_MT^{1-\frac{1}{p}}||u||_p+c_MT^{1-\frac{1}{p}}||v||_p.
\end{equation}
Dividing \eqref{28} by $T$ and knowing that $T^{-\frac{1}{p}}||u||_p=\overline{u}_p$ and $T^{-\frac{1}{p}}||v||_p=\overline{v}_p$ we obtain,
\begin{equation}\label{8}
	\overline{a}\leqslant b_M\overline{u}_p+c_M\overline{v}_p.
\end{equation}
Similarly, applying \eqref{3} on the left-hand side of \eqref{7} we obtain,
	\[\int_{0}^{T}a\geqslant b_L\frac{1}{||u||_\infty^{p-1}}||u||_p^p+c_L\frac{1}{||v||_\infty^{p-1}}||v||_p^p.\]
Hence, it follows from Lemma~\ref{Lemma1} that,
\begin{equation}\label{9}
	\int_{0}^{T}a\geqslant b_L\frac{1}{U^{p-1}}||u||_p^p+c_L\frac{1}{V^{p-1}}||v||_p^p.
\end{equation}
Multiplying \eqref{9} by $T^{-1}$ we obtain,
\begin{equation}\label{10}
	\overline{a}\geqslant b_L\frac{1}{U^{p-1}}\overline{u}_p^p+c_L\frac{1}{V^{p-1}}\overline{v}_p^p.
\end{equation}
Let us now look to the second equation of \eqref{1}. Dividing it by $v$ and integrating it in $t$, from $0$ to $T$, we obtain
	\[\int_{0}^{T}d=-\int_{0}^{T}eu+\int_{0}^{T}fv,\]
and thus,
\begin{equation}\label{11}
	-e_M||u||_1+f_L||v||_1\leqslant\int_{0}^{T}d\leqslant-e_L||u||_1+f_M||v||_1.
\end{equation}
Applying \eqref{2} (resp. \eqref{3} and Lemma~\ref{Lemma1}) on the positive (resp. negative) term of the right-hand side of \eqref{11} we obtain,
\begin{equation}\label{12}
	\int_{0}^{T}d\leqslant-e_L\frac{1}{U^{p-1}}||u||_p^p+f_MT^{1-\frac{1}{p}}||v||_p.
\end{equation}
Multiplying \eqref{12} by $T^{-1}$ we obtain,
\begin{equation}\label{13}
	\overline{d}\leqslant-e_L\frac{1}{U^{p-1}}\overline{u}_p^p+f_M\overline{v}_p.
\end{equation}
Similarly, it follows from the left-hand side of \eqref{11} that,
\begin{equation}\label{14}
	\overline{d}\geqslant-e_M\overline{u}_p+f_L\frac{1}{V^{p-1}}\overline{v}_p^p.
\end{equation}
Now observe that equations \eqref{8}, \eqref{10}, \eqref{13} and \eqref{14} are the four equations given at the definition \eqref{21} of the set $C_p$. Hence, we obtained statement $(a)$ of Theorem~\ref{Main1}.

Statement $(b)$ follows directly from Theorem~\ref{T2}. More precisely, knowing that 
	\[||eu||_p\leqslant e_M||u||_p, \quad ||cv||_p\leqslant c_M||v||_p, \quad ||bu-fv||_1\leqslant b_M||u||_1+f_M||v||_p,\]
and that $T^\frac{1}{q}=T/T^\frac{1}{p}$, one can see that the left-hand side of \eqref{20} can be majored by,
	\[T\left(\sqrt{c_Me_M\overline{u}_p\overline{v}_p}+\frac{1}{2}(b_M\overline{u}_1+f_M\overline{v}_1)\right).\]
Since every possible coexistence state $(u,v)$ satisfies $(\overline{u}_p,\overline{v}_p)\in C_p$ it follows from Theorem~\ref{T2} that if 
	\[T\left(\sqrt{c_Me_Mx_py_p}+\frac{1}{2}(b_Mx_1+f_My_1)\right)\leqslant\frac{\mathcal{J}(q)}{2^{2-\frac{1}{q}}}\]
for every $(x_p,y_p)\in C_p$ and $(x_1,y_1)\in C_1$, then the coexistence state is unique and asymptotically stable. \end{proof}

\section{An example and further thoughts}\label{Sec5}

At first glance, one could look at the proof of Theorem~\ref{Main1} and conclude that since the boundaries of $C_p$ are obtained by majoration and minoration of the boundaries of $C_1$ (which in turn was obtained by R. Ortega~\cite[p. $11$]{Ort2021}), then no new information could be drawn from such theorem. However, as the next example will show, this is not the case. 

\begin{example}\label{Example1}
	Consider a Lotka-Volterra system
	\begin{equation}\label{4}
		\dot u=u(a-bu-cv), \quad \dot v=v(d+eu-fv),
	\end{equation}
	with the coefficients being positive constants. We recall the sufficient condition 
	\begin{equation}\label{22}
		T\left(\sqrt{c_Me_Mx_py_p}+\frac{1}{2}(b_Mx_1+f_My_1)\right)\leqslant\frac{\mathcal{J}(q)}{2^{2-\frac{1}{q}}}
	\end{equation}
	of Theorem~\ref{Main1}$(b)$. Let $F(q)=\mathcal{J}(q)/2^{2-\frac{1}{q}}$, where $q=p/(p-1)$ is the conjugate of $p$, and let also $\mathcal{F}(p)=F(p/(p-1))$ (see Appendix~\ref{AppC}). Since the coefficients of \eqref{4} are constants, it is not hard to see that $C_1=\{(x_1,y_1)\}$ is a single point, given by the unique solution of
		\[\left(\begin{array}{cc} b & c \\ -e & f \end{array}\right)\left(\begin{array}{c} x_1 \\ y_1 \end{array}\right)=\left(\begin{array}{c} a \\ d \end{array}\right).\]
	Let $k=(b_Mx_1+f_My_1)/2$ and observe that we can rewrite \eqref{22} as $x_p^py_p^p\leqslant h(p)$, where
		\[h(p)=\left[\frac{1}{\sqrt{ce}}\left(\frac{\mathcal{F}(p)}{T}-k\right)\right]^{2p}.\]
	We recall that $C_p\subset\mathbb{R}^2$ is the set bounded by $x>0$, $y>0$ and
	\[\begin{array}{rcl}
		b_LU^{1-p}x^p+c_LV^{1-p}y^p\leqslant &\overline{a}& \leqslant b_Mx+c_My \vspace{0.2cm} \\
		-e_Mx+f_LV^{1-p}y^p\leqslant &\overline{d}& \leqslant-e_LU^{1-p}x^p+f_My,
	\end{array}\]
	if $p\in[1,\infty)$, and by
		\[0<x\leqslant U, \quad 0<y\leqslant V,\]
	if $p=\infty$ (for an illustration, see Appendix~\ref{AppB}). Therefore, since the left-hand side of \eqref{22} is an increasing function of $x_p$ and $y_p$ (recall that $x_1$ and $y_1$ are constants), it follows that its maximum happens somewhere on the curve
		\[bU^{1-p}x_p^p+cV^{1-p}y_p^p=a.\]
	Hence, the maximum occurs at some point $(x_p,y_p)$ such that
	\begin{equation}\label{23}
		y_p^p=c^{-1}V^{p-1}(a-bU^{1-p}x_p^p).
	\end{equation}	
	Replacing \eqref{23} at $x_p^py_p^p\leqslant h(p)$ we obtain,
	\begin{equation}\label{24}
		-\left[\frac{b}{c}\left(\frac{V}{U}\right)^{p-1}\right]w^2+\left[\frac{a}{c}V^{p-1}\right]w-h(p)\leqslant0,
	\end{equation}
	where $w=x_p^p$. The discriminant of \eqref{24} in relation to $w$ is given by,
		\[\Delta(p)=V^{p-1}\left[\frac{a^2}{c^2}V^{p-1}-4\frac{b}{c}\frac{1}{U^{p-1}}h(p)\right].\]
	Therefore, to study its sign it is sufficient to study the sign of	
		\[G(p)=\frac{a^2}{c^2}V^{p-1}-4\frac{b}{c}\frac{1}{U^{p-1}}h(p).\]
	It follows from this reasoning that \eqref{22} is equivalent to \eqref{24}. Since the leading coefficient of the left-hand side of \eqref{24} is negative, it follows that the inequality holds at a given $p\in[1,\infty)$ if, and only if, $G(p)\leqslant0$. 
	
	Hence we conclude that we can apply Theorem~\ref{Main1} at \eqref{4} at a given $p\in[1,\infty)$ if, and only if, $G(p)\leqslant0$. Therefore, if for some choice of the coefficients it holds that
	\begin{equation}\label{25}
		G(1)>0, \quad G(p^*)<0, \quad \lim\limits_{p\to\infty}G(p)>0,
	\end{equation}
	for some $p^*\in(1,\infty)$, then the $L_1$ and $L_\infty$-conditions (recall Remarks~\ref{Remark2} and \ref{Remark3}) will be inconclusive. Moreover, since the coefficients are constant, it follows that the upper bounds \eqref{26} of the $L_p$-condition is reduced to $\overline{u}_p\leqslant U$, $\overline{v}_p\leqslant V$ and thus coincide with the $L_\infty$-condition. Therefore, we conclude that if \eqref{25} holds, then the previous results in the literature are inconclusive, while Theorem~\ref{Main1} can still be applied at $p=p^*$ and thus ensure the uniqueness and asymptotically stability of the coexistence state.
	
	For a example of such situation, one can consider $T=1$ and the values 
	\begin{equation}\label{30}
		a=2.0102, \quad b=1, \quad c=0.0051, \quad d=2.0203, \quad e=0.9898, \quad f=2,
	\end{equation}
	and see that \eqref{25} holds with $p^*=2$.
\end{example}

\begin{remark}
	We have assumed constant coefficients at Example~\ref{Example1} for the sake of simplicity. However, it follows from the continuous dependence of the boundaries of $C_p$, $p\in[1,\infty]$, in relation to the coefficient functions $a(t),\dots,f(t)$ that if the amplitude of such coefficients (i.e. $a_M-a_L,\dots,f_M-a_L$) are small enough and if their averages are close enough to \eqref{30}, then the same conclusion holds. Moreover, the assumption $T=1$ is no loss of generality since it can be obtained by time rescaling.
\end{remark}

We think that one reason for Theorem~\ref{Main1} to be conclusive in some situations while the previous one are not is the fact that although the boundaries of $C_p$ are obtained by inequalities on the boundaries of $C_1$, the function $\mathcal{F}(p)$ \emph{also increases} (see Appendix~\ref{AppC}). Therefore, for a given $p\in(1,\infty)$, the function $\mathcal{F}(p)$ may have increased in such a way that it compensates the boundaries of $C_p$.

Moreover in comparison with the boundaries provided by \eqref{26}, as anticipated in the introduction, the fact that the two inequalities are independent from each other provides a \emph{unified} test for uniqueness and asymptotically stability of the coexistence states. More precisely, if
	\[T^\frac{1}{q}\sqrt{c_Me_M\alpha_p\beta_p}+\frac{1}{2}(b_M\alpha_1+f_M\beta_1)\leqslant\frac{\mathcal{J}(q)}{2^{2-\frac{1}{q}}}\]
for some $p\in[1,\infty]$, where
	\[\alpha_p=\frac{||a||_p}{b_L}, \quad \beta_p=\frac{||d||_p}{f_L}+\frac{e_M}{f_L}\frac{||a||_p}{b_L},\]
then we are done. However, if the unified test is inconclusive then the intertwined boundaries \eqref{21} of $C_p$ can reduce the lack of possibilities of the averages of the coexistence states in such a way that for all \emph{possible} coexistence states, inequality \eqref{20} still holds.

\section{Acknowledgments}

The author is grateful for the hospitality of Universidad de Granada, where he had many stimulating conversations with professor R. Ortega.

This work is supported by S\~ao Paulo Research Foundation (FAPESP), grants 2019/10269-3, 2021/01799-9 and 2022/14353-1.

\appendix

\section{Linearly stable, asymptotic stable and Floquet Theory}\label{AppA}

Consider a system $T$-periodic system of differential equations
\begin{equation}\label{15}
	\dot u=H(t,u),
\end{equation}
with $u\in\mathbb{R}^n$ and $H\colon\mathbb{R}\times\mathbb{R}^n\to\mathbb{R}^n$ sufficiently smooth. Let $u_0(t)$ be a $T$-periodic solution of \eqref{15}. The \emph{linearization} of \eqref{15} at $u_0(t)$ is the $T$-periodic linear system given by
\begin{equation}\label{16}
	\dot u=D_uH(t,u_0(t))u,
\end{equation}
where $D_uH$ denotes the Jacobian matrix of $H$ in relation to the variable $u$. We say that $u_0(t)$ is \emph{linearly stable} if all \emph{Floquet exponents} (also known as \emph{characteristics exponents}) associated to \eqref{16} have non-positive real part. Fore more details about Floquet Theory, we refer to \cite[Section $3.5$]{CL} and \cite{NovaesPereira}.

We say that $u_0$ is \emph{stable} if for every neighborhood $V\subset\mathbb{R}^n$ of $u_0(0)$ there is a neighborhood $W\subset V$ of $u_0(0)$ such that if $u(t)$ is a solution of \eqref{15} with $u(0)\in W$, then $u(t)\in V$ for $t\geqslant0$. Moreover, given an open set $U\subset\mathbb{R}^n$, we say that $u_0(t)$ is \emph{asymptotically stable} in $U$ if $u_0$ is stable and
	\[\lim\limits_{t\to\infty}|u_0(t)-u(t)|=0,\]
for every solution $u(t)$ of \eqref{15} with $u(0)\in U$.

\section{Illustrations of the set $C_p$ and construction of $C_\infty$}\label{AppB}

We recall that $C_p\subset\mathbb{R}^2$ is the set bounded by $x>0$, $y>0$ and
\[\begin{array}{rcl}
	b_LU^{1-p}x^p+c_LV^{1-p}y^p\leqslant &\overline{a}& \leqslant b_Mx+c_My \vspace{0.2cm} \\
	-e_Mx+f_LV^{1-p}y^p\leqslant &\overline{d}& \leqslant-e_LU^{1-p}x^p+f_My,
\end{array}\]
for $p\in[1,\infty)$, and by
	\[0<x\leqslant U, \quad 0<y\leqslant V,\]
for $p=\infty$. See Figure~\ref{Fig1}. 
\begin{figure}[h]
	\begin{center}
		\begin{minipage}{6cm}
			\begin{center}
				\begin{overpic}[height=4cm]{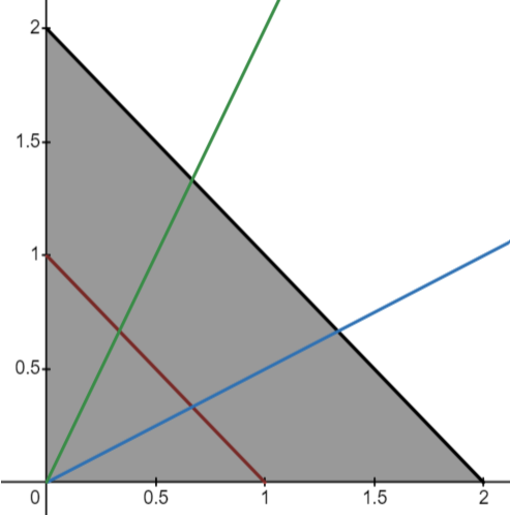} 
				\end{overpic}
				
				$p=1$.
			\end{center}
		\end{minipage}
		\begin{minipage}{6cm}
			\begin{center}
				\begin{overpic}[height=4cm]{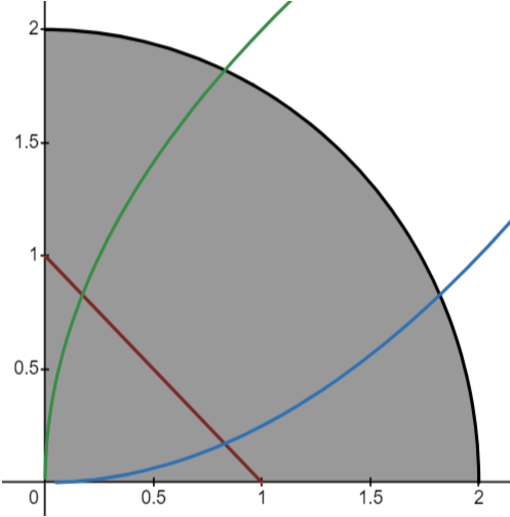} 
				\end{overpic}
				
				$p=2$.
			\end{center}
		\end{minipage}
	\end{center}
$\;$
	\begin{center}
		\begin{minipage}{6cm}
			\begin{center}
				\begin{overpic}[height=4cm]{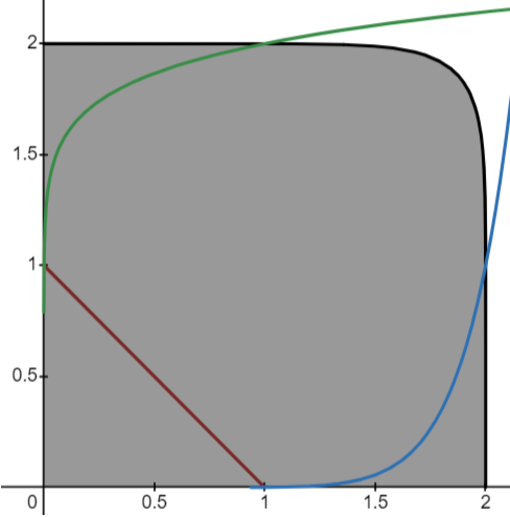} 
				\end{overpic}
				
				$p=10$.
			\end{center}
		\end{minipage}
		\begin{minipage}{6cm}
			\begin{center}
				\begin{overpic}[height=4cm]{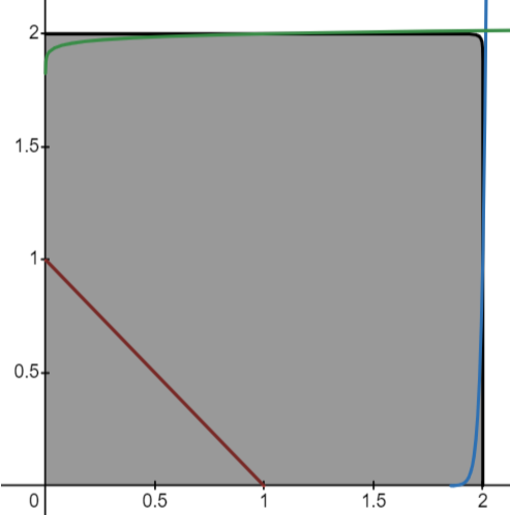} 
				\end{overpic}
				
				$p=100$.
			\end{center}
		\end{minipage}
	\end{center}
	\caption{Illustrations of $C_p$, given by the region bounded by the four curves, with parameters $\overline{a}=2$, $\overline{d}=2$, $b_M=2$, $b_L=1$, $c_M=2$, $c_L=1$, $e_M=2$, $e_L=1$, $f_M=2$, $f_L=1$, $U=2$ and $V=2$. The black region (resp. curve) corresponds to the inequality (resp. equality) $b_LU^{1-p}x^p+c_LV^{1-p}y^p\leqslant\overline{a}$. The red curve correspond to $b_Mx+c_My=\overline{a}$, while the green the blue ones correspond to $-e_Mx+f_LV^{1-p}y^p=\overline{d}$ and $-e_LU^{1-p}x^p+f_My=\overline{d}$, respectively. Together, they form the boundaries of $C_p$. Colors available in the online version.}\label{Fig1}
\end{figure}
To obtain the definition of $C^\infty$, consider the inequality
\begin{equation}\label{27}
	b_LU^{1-p}x^p+c_LV^{1-p}y^p\leqslant\overline{a}.
\end{equation}
Since the left-hand side is strictly increasing in $x$ and $y$ it is clear that the maximum value of $x$ occurs when $y=0$. Hence, replacing $y=0$ at \eqref{27} we obtain,
	\[x\leqslant\left(\frac{\overline{a}}{b_L}\right)^\frac{1}{p}U^\frac{1}{q}.\] 
Taking the limit $p\to\infty$ we obtain $x\leqslant U$.
Similarly, by replacing $x=0$ at \eqref{27} and taking the limit $p\to\infty$ one obtains $y\leqslant V$.

\section{Properties of the maps $\mathcal{J}$, $F$ and $\mathcal{F}$.}\label{AppC}

We recall that $\mathcal{J}\colon[1,\infty)\to\mathbb{R}$ is the function given by,
\begin{equation}\label{29}
	\mathcal{J}(q)=\int_{0}^{2\pi}\frac{1}{\bigl(|\cos\theta|^{2q}+|\sin\theta|^{2q}\bigr)^\frac{1}{q}}\;d\theta.
\end{equation}
Moreover, it follows from \cite[Proposition~$2.2$]{OrtReb2023} that
	\[\lim\limits_{q\to\infty}\mathcal{J}(q)=8,\]
and thus we can continuously extend $\mathcal{J}$ to $[1,\infty]$ by defining $\mathcal{J}(\infty)=8$. We also recall that $F$, $\mathcal{F}\colon\mathbb[1,\infty]\to\mathbb{R}$ are the functions given by 
\begin{equation}\label{33}
	F(q)=\frac{\mathcal{J}(q)}{2^{2-\frac{1}{q}}}, \quad \mathcal{F}(p)=F\left(\frac{p}{p-1}\right)=\dfrac{\mathcal{J}\left(\frac{p}{p-1}\right)}{2^{1+\frac{1}{p}}},
\end{equation}
where $q=p/(p-1)$ is the conjugate of $p$. For a numerical plot of the graph of $\mathcal{F}(p)$, see Figure~\ref{Fig2}.
\begin{figure}[h]
	\begin{center}
		\begin{minipage}{12cm}
			\begin{center}
				\begin{overpic}[height=4cm]{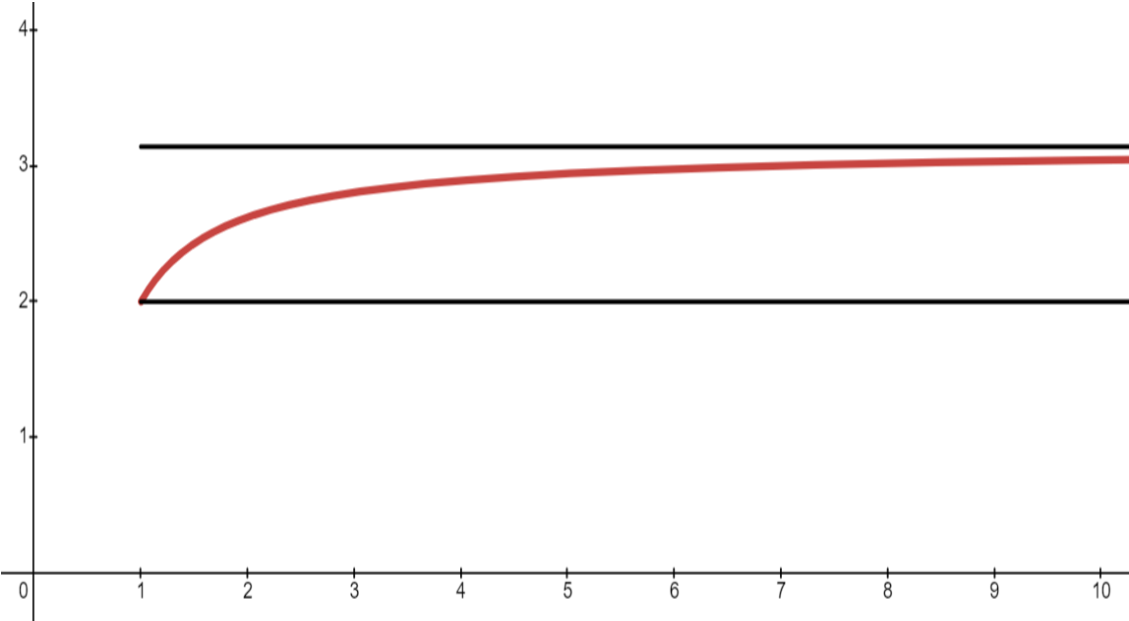} 
				\end{overpic}
			\end{center}
		\end{minipage}
	\end{center}
	\caption{Graph of the function $\mathcal{F}(p)$, in red. The black straight lines are the lower and upper bounds $y=2$ and $y=\pi$, respectively. Colors available in the online version.}\label{Fig2}
\end{figure}

\begin{proposition}\label{Prop1}
	The functions $F$ and $\mathcal{F}$ are strictly decreasing and increasing, respectively.
\end{proposition}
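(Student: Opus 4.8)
The plan is to reduce the monotonicity of $F$ and $\mathcal{F}$ to that of the single-variable function $\mathcal{J}$. First I would observe that since $\mathcal{F}(p) = F(p/(p-1))$ and the map $p\mapsto q=p/(p-1)$ is an orientation-\emph{reversing} bijection from $[1,\infty]$ onto $[1,\infty]$ (it sends $1\mapsto\infty$ and $\infty\mapsto1$), the two claims are equivalent: $F$ strictly decreasing on $[1,\infty]$ is the same statement as $\mathcal{F}$ strictly increasing on $[1,\infty]$. So it suffices to prove that $F(q)=\mathcal{J}(q)/2^{2-1/q}$ is strictly decreasing in $q$. Taking logarithms, $\log F(q) = \log \mathcal{J}(q) - (2-\tfrac1q)\log 2$, and the subtracted term is strictly increasing in $q$ (since $-1/q$ is increasing), so the real work is to control the growth of $\mathcal{J}(q)$: I need a bound strong enough to show $\log\mathcal{J}(q)$ grows slower than $(\log 2)/q^2$ would allow the product to turn around — more precisely, that $\frac{d}{dq}\log\mathcal{J}(q) < \frac{\log 2}{q^2}$ for all $q>1$, or equivalently a direct sign analysis of $\mathcal{J}'(q)\,2^{2-1/q} - \mathcal{J}(q)\,2^{2-1/q}(\log 2)/q^2$.

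Next I would differentiate $\mathcal{J}$ under the integral sign. Writing $g(\theta) = |\cos\theta|^{2q}+|\sin\theta|^{2q}$, the integrand is $g(\theta)^{-1/q}$, and
\[
\frac{\partial}{\partial q}\, g(\theta)^{-1/q} = g(\theta)^{-1/q}\left(\frac{1}{q^2}\log g(\theta) - \frac{1}{q}\cdot\frac{g_q(\theta)}{g(\theta)}\right),
\]
where $g_q(\theta) = 2(|\cos\theta|^{2q}\log|\cos\theta| + |\sin\theta|^{2q}\log|\sin\theta|)$. Both $|\cos\theta|$ and $|\sin\theta|$ lie in $[0,1]$, so $\log g(\theta)\le 0$ (indeed $g(\theta)\le 1$ for $q\ge 1$, with equality only at multiples of $\pi/2$) and $g_q(\theta)\le 0$. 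The sign of the bracket is therefore a competition between a nonpositive term $\frac{1}{q^2}\log g$ and the term $-\frac1q \frac{g_q}{g}$ whose sign needs unpacking; since $g_q\le 0$, that second term is $\ge 0$. So I cannot conclude $\mathcal{J}'<0$ pointwise, and I do not need to — I need the integrated inequality. The cleanest route is to pull the known endpoint value $\mathcal{J}(\infty)=8$ and the monotonicity structure from \cite{OrtReb2023}: I would check whether Proposition~2.2 of that paper already gives monotonicity of $\mathcal{J}$, in which case the proof collapses to the elementary observation above plus the algebraic fact that multiplying a positive decreasing (or slowly increasing) function by the increasing factor $2^{-(2-1/q)}$ preserves the decrease, once one verifies the logarithmic-derivative inequality numerically-free via the estimate $\mathcal{J}(q)\ge\mathcal{J}(\infty)=8$ and $\mathcal{J}(q)\le\mathcal{J}(1)=\int_0^{2\pi}(\cos^2\theta+\sin^2\theta)^{-1}d\theta = 2\pi$.

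Concretely, the key steps in order: (1) record that $p\mapsto p/(p-1)$ is a decreasing involution of $[1,\infty]$, so "$F$ decreasing" $\iff$ "$\mathcal{F}$ increasing"; (2) reduce to showing $q\mapsto \log\mathcal{J}(q) - (2-1/q)\log 2$ is strictly decreasing; (3) differentiate $\mathcal{J}$ under the integral and assemble the inequality $\frac{\mathcal{J}'(q)}{\mathcal{J}(q)} < \frac{\log 2}{q^2}$; (4) establish (3) using the pointwise bounds $\log g(\theta)\le 0$, $g_q(\theta)\le 0$, the substitution $\theta\mapsto\theta+\pi/2$ symmetry that lets one integrate over $[0,\pi/4]$ and exploit $|\cos\theta|\ge|\sin\theta|$ there, and the crude envelope $8\le\mathcal{J}(q)\le 2\pi$; (5) conclude $F$ strictly decreasing, hence $\mathcal{F}$ strictly increasing. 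I expect the main obstacle to be step (4): the derivative of $\mathcal{J}$ has a genuinely non-pointwise sign, so I must either invoke the monotonicity already proved in \cite{OrtReb2023} or carry out a careful integral estimate splitting the circle into the arcs where one of $|\cos\theta|,|\sin\theta|$ dominates, bounding $-g_q/g$ above by something like $-2\log|\sin\theta|$ times a weight and integrating against $g^{-1/q}$; keeping that estimate clean enough to beat $\frac{\log 2}{q^2}\mathcal{J}(q)$ uniformly in $q\in(1,\infty)$ is the delicate part. If \cite{OrtReb2023} only supplies the limit $\mathcal{J}(q)\to 8$ and not monotonicity of $\mathcal{J}$ itself, I would fall back on a convexity/Jensen argument: $g^{-1/q}$ as a function of $q$, for fixed $\theta$, can be analyzed via $\log g^{-1/q} = -\frac1q\log g$ and the fact that $-\log g(\theta)\ge 0$ together with $\frac{d}{dq}(-\frac1q\log g) = \frac{1}{q^2}\log g - \frac1q\frac{g_q}{g}$, then bound the problematic term using $|g_q|\le (\log q)\,g$-type inequalities valid since $t^{2q}|\log t|$ is maximized at $t=e^{-1/(2q)}$ with value $\frac{1}{2eq}$, giving $|g_q|\le \frac{1}{eq}$ and hence $-\frac1q\frac{g_q}{g}\le \frac{1}{eq^2 g}\le \frac{1}{eq^2}\cdot\frac{1}{g_L}$; combined with $\frac{1}{q^2}\int \log g\cdot g^{-1/q}\,d\theta < 0$ being bounded away from $0$, this should close the gap, though verifying the constants line up is where the care is needed.
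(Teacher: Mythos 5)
Your reduction is sound and matches the paper's framework: the substitution $q=p/(p-1)$ is an orientation-reversing bijection of $[1,\infty]$, so it suffices to show $F$ is strictly decreasing, and differentiating under the integral sign turns this into the inequality $\frac{\partial}{\partial q}\log\bigl(g(\theta)^{-1/q}\bigr)=\frac{1}{q^2}\log g-\frac{1}{q}\frac{g_q}{g}\leqslant\frac{\log 2}{q^2}$ in some form, where $g=|\cos\theta|^{2q}+|\sin\theta|^{2q}$. But your step (4) — the only step with real content — is not carried out, and the routes you sketch for it do not close. First, your premise that the sign is ``genuinely non-pointwise'' is wrong in the one place it matters: the paper keeps the factor $2^{1/q-2}$ inside the integrand of $F$ and proves the \emph{pointwise} inequality $\frac{1}{q^2}\log g(\theta)-\frac{1}{q}\frac{g_q(\theta)}{g(\theta)}\leqslant\frac{\log 2}{q^2}$ for every $\theta$, with equality exactly when $\cos^2\theta=\sin^2\theta$. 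This is done by a critical-point analysis in $\theta$ of the equivalent quantity $g_3(\theta,q)$ (the paper's \eqref{31}): its extrema occur only at $\cos\theta\sin\theta=0$ (where $g_3=-\ln 2<0$) or $\cos^2\theta=\sin^2\theta$ (where $g_3=0$), so $g_3\leqslant 0$ everywhere and the integrand of $F'$ is negative off a null set. You never produce an argument of this kind.

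Second, your fallback estimates are quantitatively hopeless and in one place inconsistent. The envelope ``$8\leqslant\mathcal{J}(q)\leqslant 2\pi$'' is vacuous since $2\pi<8$; in fact $\mathcal{J}$ increases from $\mathcal{J}(1)=2\pi$ to $\mathcal{J}(\infty)=8$ (as the paper's final remark shows, the integrand of $\mathcal{J}'$ is pointwise nonnegative), so any monotonicity you could import from \cite{OrtReb2023} works \emph{against} you: both $\mathcal{J}(q)$ and $2^{2-1/q}$ increase, and the whole point is the delicate comparison of their rates. Finally, bounding the two terms of $\frac{1}{q^2}\log g-\frac{1}{q}\frac{g_q}{g}$ separately cannot work: your estimate $-\frac{1}{q}\frac{g_q}{g}\leqslant\frac{C}{q^2}\cdot\frac{1}{g_L}$ uses $g_L=\min_\theta g=2^{1-q}$, so the bound is of order $2^{q}/q^2$, exponentially larger than the target $\frac{\log 2}{q^2}$, while the compensating term $\frac{1}{q^2}\int(\log g)\,g^{-1/q}$ is only $O(1/q)$ in magnitude. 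The inequality is saturated at $\theta=\pi/4$ (there $\frac{1}{q^2}\log g=\frac{(1-q)\log 2}{q^2}$ and $-\frac{1}{q}\frac{g_q}{g}=\frac{\log 2}{q}$, summing exactly to $\frac{\log 2}{q^2}$), so the two terms must be played off against each other pointwise; any argument that decouples them loses precisely the cancellation that makes the proposition true.
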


\begin{proof} First we consider the function $F$. That is, we consider
\begin{equation}\label{35}
	F(q)=\frac{\mathcal{J}(q)}{2^{2-\frac{1}{q}}}=\int_{0}^{2\pi}2^{\frac{1}{q}-2}\bigl(|\cos\theta|^{2q}+|\sin\theta|^{2q}\bigr)^{-\frac{1}{q}}\;d\theta.
\end{equation}
Consider the integrand of the outermost right-hand side of \eqref{35},
	\[g_1(\theta,q)=2^{\frac{1}{q}-2}\bigl(|\cos\theta|^{2q}+|\sin\theta|^{2q}\bigr)^{-\frac{1}{q}}.\]
Differentiating it in relation to $q$ we obtain	
\begin{equation}\label{34}
	\frac{\partial g_1}{\partial q}(\theta,q)=g_1(\theta,q)\left[\frac{1}{q^2(\cos^{2q}\theta+\sin^{2q}\theta)}g_2(\theta,q)+\ln\bigl(2^{-1/q^2}\bigr)\right],
\end{equation}
where
\begin{equation}\label{36}
	\begin{array}{l}
		\displaystyle g_2(\theta,q)= \cos^{2q}\theta\bigl[\ln(\cos^{2q}\theta+\sin^{2q}\theta)-\ln\cos^{2q}\theta\bigr] \vspace{0.2cm} \\
		\displaystyle \qquad\qquad\qquad + \sin^{2q}\theta\bigl[\ln(\cos^{2q}\theta+\sin^{2q}\theta)-\ln\sin^{2q}\theta\bigr].
	\end{array}
\end{equation}
Since $g_1$ is strictly positive, it follows that the sign of \eqref{34} is given by the sign of,
\begin{equation}\label{38}
	\frac{1}{q^2(\cos^{2q}\theta+\sin^{2q}\theta)}g_2(\theta,q)+\ln\bigl(2^{-1/q^2}\bigr).
\end{equation}
We claim that \eqref{38} is negative. Indeed, observe that \eqref{38} is negative if and only if,
	\[g_2(\theta,q)\leqslant -q^2(\cos^{2q}\theta+\sin^{2q}\theta)\ln\bigl(2^{-1/q^2}\bigr).\]
Which by properties of the logarithm is equivalent to,
	\[g_2(\theta,q)\leqslant (\cos^{2q}\theta+\sin^{2q}\theta)\ln2.\]
Which in turn is equivalent to,
\begin{equation}\label{31}
	\begin{array}{l}
		\displaystyle g_3(\theta,q):= \cos^{2q}\bigl[\ln(\cos^{2q}\theta+\sin^{2q}\theta)-\ln(2\cos^{2q}\theta)\bigr] \vspace{0.2cm} \\
		\displaystyle  \qquad\qquad\qquad +
		\sin^{2q}\bigl[\ln(\cos^{2q}\theta+\sin^{2q}\theta)-\ln(2\sin^{2q}\theta)\bigr]\leqslant0.
	\end{array}
\end{equation}
Differentiating \eqref{31} in relation to $\theta$ we obtain,
\begin{equation}\label{32}
	\begin{array}{l}
		\displaystyle \frac{\partial g_3}{\partial\theta}(\theta,q)= 2q\sin\theta\cos\theta\Bigl[\cos^{2(q-1)}\theta\bigl[\ln(2\cos^{2q}\theta)-\ln(\cos^{2q}\theta+\sin^{2q}\theta)\bigr] \vspace{0.2cm} \\
		\displaystyle  \qquad\qquad\qquad\qquad\qquad\qquad -\sin^{2(q-1)}\theta\bigl[\ln(2\sin^{2q}\theta)-\ln(\cos^{2q}\theta+\sin^{2q}\theta)\bigr]\Bigr].
	\end{array}
\end{equation}
Clearly, the zeros of \eqref{32} in $\theta$ are given by $\theta\in\{0,\pi/2,\pi,3\pi/2\}$ and by the zeros of
\begin{equation}\label{37}
	\begin{array}{l}
		\displaystyle g_4(\theta,q):= \cos^{2(q-1)}\theta\bigl[\ln(2\cos^{2q}\theta)-\ln(\cos^{2q}\theta+\sin^{2q}\theta)\bigr] \vspace{0.2cm} \\
		\displaystyle  \qquad\qquad\qquad\qquad -\sin^{2(q-1)}\theta\bigl[\ln(2\sin^{2q}\theta)-\ln(\cos^{2q}\theta+\sin^{2q}\theta)\bigr].
	\end{array}
\end{equation}
Observe that if $\theta_0\in[0,2\pi]$ is such that $\cos^2\theta_0=\sin^2\theta_0$, then $g_4(\theta_0,q)=0$. Reciprocally, we claim that if $\cos^2\theta_0\neq0\sin^2\theta_0$, then $g_4(\theta_0,q)\neq0$. Indeed, if $\cos^2\theta_0>\sin^2\theta_0$, then it follows from the fact that the logarithm is a strictly increasing function that,
	\[\ln(2\cos^{2q}\theta_0)>\ln(\cos^{2q}\theta_0+\sin^{2q}\theta_0), \quad \ln(2\sin^{2q}\theta_0)<\ln(\cos^{2q}\theta_0+\sin^{2q}\theta_0).\]
Hence, $g_4(\theta_0,q)>0$. Similarly if $\cos^2\theta_0<\sin^2\theta_0$, then $g_4(\theta_0,q)<0$.

Therefore, we conclude that the extreme points $\theta_0\in[0,2\pi]$ of \eqref{31} are such that $\cos\theta_0\sin\theta_0=0$ or $\cos^2\theta_0=\sin^2\theta_0$. In any case, it is easy to see that \eqref{31} holds (with the equality holding only if $\cos^2\theta_0=\sin^2\theta_0$), which in turn implies that the same holds for \eqref{34}. Therefore, it follows from Leibniz integral rule that
	\[F'(q)=\int_{0}^{2\pi}\frac{\partial g_1}{\partial q}(\theta,q)\;d\theta<0.\]
That is, $F'(q)$ is strictly decreasing. Since $\mathcal{F}(p)=F(p/(p-1))$, it follows that
	\[\mathcal{F}'(p)=-\frac{1}{(p-1)^2}F'\left(\frac{p}{p-1}\right)>0,\]
and thus $\mathcal{F}$ is strictly increasing. \end{proof}

\begin{remark}
	With the same reasoning of Proposition~\ref{Prop1}, one can prove that $\mathcal{J}(q)$ is also strictly increasing. One just need to consider the integrand
		\[h_1(\theta,q)=\frac{1}{\bigl(|\cos\theta|^{2q}+|\sin\theta|^{2q}\bigr)^\frac{1}{q}},\]
	observe that
		\[\frac{\partial h_1}{\partial q}(\theta,q)=\frac{h_1(\theta,q)}{q^2(\cos^{2q}\theta+\sin^{2q}\theta)}g_2(\theta,q),\]
	with $g_2$ given by \eqref{36}, and observe that since the logarithm is strictly increasing, it follows that
		\[\ln(\cos^{2q}\theta+\sin^{2q}\theta)\geqslant\ln\cos^{2q}\theta, \quad \ln(\cos^{2q}\theta+\sin^{2q}\theta)\geqslant\ln\sin^{2q}\theta,\]
	with equality holding only if $\cos\theta\sin\theta=0$. The proof now follows from Leibniz integral rule.
\end{remark}

\end{document}